\theoremstyle{plain}
\newtheorem{theorem}{Theorem}[section]
\newtheorem{corollary}[theorem]{Corollary}
\newtheorem{definition}[theorem]{Definition}
\newtheorem{conjecture}[theorem]{Conjecture}
\theoremstyle{remark}
\newtheorem{remark}[theorem]{Remark}
\def\im{{\rm im}}
\def\dim{{\rm dim}}
\def\Z{\mathbb Z}
\def\C{\mathbb{C}}
\def\N{\mathbb{N}}
\title{A note on normal generation and generation of groups}
\author{Andreas Thom}
\address{A.T., Mathematisches Institut, U Leipzig,
PF 100920, 04009 Leipzig, Germany}
\email{andreas.thom@math.uni-leipzig.de}
\subjclass{16S34, 46L10, 46L50}
\begin{document}

\onehalfspace

\begin{abstract} In this note we study sets of normal generators of finitely presented residually $p$-finite groups. We show that if an infinite, finitely presented, residually $p$-finite group $G$ is normally generated by $g_1,\dots,g_k$ with order $n_1,\dots,n_k \in \{1,2,\dots \} \cup \{\infty \}$, then
$$\beta_1^{(2)}(G) \leq k-1-\sum_{i=1}^{k} \frac1{n_i},$$
where $\beta_1^{(2)}(G)$ denotes the first $\ell^2$-Betti number of $G$. We also show that any $k$-generated group with $\beta_1^{(2)}(G) \geq k-1-\varepsilon$ must have girth greater than or equal $1/\varepsilon$.
\end{abstract}

\maketitle

\section{Introduction}

In the first part of this note we want to prove estimates of the number of normal generators of a discrete group in terms of its first $\ell^2$-Betti number. It is well-known that if a non-trivial discrete group is generated by $k$ elements, then 
\begin{equation} \label{triv}
\beta_1^{(2)}(G) \leq k-1.
\end{equation} The proof of this statement is essentially trivial using the obvious Morse inequality. The following conjecture was first formulated in \cite{thomosin}.

\begin{conjecture} \label{conj1}
Let $G$ be a torsionfree discrete group. If $G$ is normally generated by elements $g_1,\dots,g_k$, then
$$\beta_1^{(2)}(G) \leq k-1.$$
\end{conjecture}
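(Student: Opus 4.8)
The plan is to prove the bound by induction on the number $k$ of normal generators, using at each step a single-normal-generator estimate for the first $\ell^2$-Betti number. Write $N_j = \langle\langle g_{j+1},\dots,g_k\rangle\rangle$ for the normal closure of the last $k-j$ generators and $Q_j = G/N_j$, so that $Q_k = G$, $Q_0 = 1$, and $Q_{j-1}$ is obtained from $Q_j$ by killing the normal closure of the single element (the image of) $g_j$. The whole argument then reduces to controlling how $\beta_1^{(2)}$ changes when one passes from a group $\Gamma$ to the quotient $\Gamma/\langle\langle g\rangle\rangle$ by the normal closure of one element.

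The key lemma I would aim for is: if $1 \to N \to \Gamma \to \bar\Gamma \to 1$ with $N = \langle\langle g\rangle\rangle$ the normal closure of a single element, then $\beta_1^{(2)}(\Gamma) \le \beta_1^{(2)}(\bar\Gamma) + 1$. The mechanism is that $H_1(N;\Z) = N/[N,N]$, being generated by the $\Gamma$-conjugates of $g$, is a cyclic $\Z[\bar\Gamma]$-module; hence its $\ell^2$-completion has von Neumann dimension at most $1$ over $\mathcal N(\bar\Gamma)$. I would feed this into the $\ell^2$-version of the five-term exact sequence of the extension, $H_1^{(2)}(N)_{\bar\Gamma} \to H_1^{(2)}(\Gamma) \to H_1^{(2)}(\bar\Gamma) \to 0$, and use additivity of von Neumann dimension to obtain $\beta_1^{(2)}(\Gamma) \le \beta_1^{(2)}(\bar\Gamma) + \dim_{\mathcal N(\bar\Gamma)}\big(H_1(N)\text{-coinvariants}\big) \le \beta_1^{(2)}(\bar\Gamma)+1$. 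Concretely one can realise this by taking a $2$-complex $Y$ with $\pi_1 Y = \Gamma$, attaching a single $2$-cell along a loop representing $g$ to form $Z$ with $\pi_1 Z = \bar\Gamma$, and comparing the $\ell^2$-chain complexes of the corresponding covers: the extra cell contributes exactly one free $\bar\Gamma$-orbit in degree two.

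Granting the lemma, telescoping along $Q_k, Q_{k-1}, \dots, Q_0$ gives $\beta_1^{(2)}(G) \le \beta_1^{(2)}(Q_0) + k = k$, since $Q_0$ is trivial. To sharpen $k$ to $k-1$ I would run the same telescoping with reduced $\ell^2$-homology, equivalently tracking the difference $\beta_1^{(2)} - \beta_0^{(2)}$ exactly as in the Morse-inequality proof of \eqref{triv}: the trivial group $Q_0$ carries $\beta_0^{(2)}(Q_0)=1$, whereas $\beta_0^{(2)}(G)=0$ because a nontrivial torsionfree group is infinite, and this single unit of $\beta_0$ is precisely the $-1$ in the statement. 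This is also where torsionfreeness of $G$ is used.

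The main obstacle I expect is twofold, and it is exactly the gap between the above and a complete proof. First, one must make the $\ell^2$ five-term sequence and the dimension bookkeeping rigorous in the category of Hilbert $\mathcal N(\Gamma)$-modules, controlling the contribution of the possibly complicated kernel $N$; the cyclicity of $H_1(N)$ over $\Z[\bar\Gamma]$ is what keeps this contribution bounded by $1$. Second, and more seriously, the clean telescoping leaves the torsionfree world: the intermediate quotients $Q_j$ of a torsionfree group may well have torsion, so the saving of the final $-1$ cannot be localised at $Q_0$ as cavalierly as above, and sharpening $\le k$ to $\le k-1$ requires a genuinely sharp, torsion-sensitive control of the $\beta_0$-type error terms along the whole chain. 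It is this last step that the residually $p$-finite hypothesis circumvents, via L\"uck-type approximation of $\beta_1^{(2)}(G)$ by normalised mod-$p$ first Betti numbers of a $p$-power-index chain, together with the Schreier-type bound $b_1(H;\mathbb F_p)\le (k-1)[G:H]+1$ for normal generation; absent any residual finiteness, no such finite-quotient approximation is available, and removing it is the crux.
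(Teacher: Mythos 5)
First, note what you are trying to prove: the statement is labelled a \emph{Conjecture} in the paper, and the paper offers no proof of it --- it only establishes a variant (Theorem \ref{main1}) under the additional hypotheses that $G$ be finitely presented and residually $p$-finite, via cocycles with values in $\Z[G/H]$, Gruenberg's Theorem \ref{gr1}, and L\"uck's Approximation Theorem \ref{lueck}. So no blind proposal should be expected to match a paper proof here; the question is whether your argument closes the open problem, and it does not. The concrete gap is in your key lemma $\beta_1^{(2)}(\Gamma) \leq \beta_1^{(2)}(\Gamma/\langle\langle g\rangle\rangle)+1$: both of your mechanisms for it silently compare von Neumann dimensions over two different algebras. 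The five-term sequence of the extension $1 \to N \to \Gamma \to \bar\Gamma \to 1$, with coefficients in $\mathcal{N}(\bar\Gamma)$, together with cyclicity of $H_1(N)$ as a $\Z[\bar\Gamma]$-module, does give $\dim_{\mathcal{N}(\bar\Gamma)} H_1(\Gamma;\mathcal{N}(\bar\Gamma)) \leq \beta_1^{(2)}(\bar\Gamma)+1$; but the left-hand side is the first $\ell^2$-Betti number of $\Gamma$ \emph{measured through the quotient} $\bar\Gamma$, not $\beta_1^{(2)}(\Gamma)$, which is a dimension over $\mathcal{N}(\Gamma)$. The same defect appears in the topological version: attaching the $2$-cell compares $\tilde Z$ with the intermediate cover $\bar Y = \tilde Y/N$ (deck group $\bar\Gamma$), not with the universal cover $\tilde Y$ (deck group $\Gamma$), and there is no general monotonicity relating $b_1^{(2)}(\bar Y;\bar\Gamma)$ to $b_1^{(2)}(\tilde Y;\Gamma)$. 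The missing inequality $\beta_1^{(2)}(\Gamma) \leq \dim_{\mathcal{N}(\bar\Gamma)} H_1(\Gamma;\mathcal{N}(\bar\Gamma))$ is precisely an approximation statement for \emph{infinite} quotients; L\"uck's theorem supplies it only as a limit along chains of finite-index normal subgroups with trivial intersection, which is exactly why the paper needs the residually $p$-finite hypothesis (and why, by Remark \ref{lack}, the conjecture is known in that restricted setting). Your lemma, if true in general, would essentially settle the conjecture up to the constant, so it cannot be taken as an available tool.

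Two further points. Even granting the lemma, your telescoping only yields $\beta_1^{(2)}(G) \leq k$, and your own discussion concedes that sharpening to $k-1$ fails to localise at the trivial quotient because the intermediate groups $Q_j$ may have torsion; Remark \ref{rem1} shows this worry is real: $PSL(2,\Z)$ is normally generated by one element yet has $\beta_1^{(2)} = \frac{1}{6} > 0$, so any torsion-blind bookkeeping of the $-1$ along the chain must break. To your credit, your closing paragraph diagnoses the crux correctly --- the absence of any finite-quotient approximation without residual finiteness --- but diagnosing the obstruction is not the same as removing it; as written, the proposal reduces the conjecture to an unproven statement that is at least as hard as the conjecture itself.
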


If $G$ is finitely presented, residually $p$-finite for some prime $p$, then Conjecture \ref{conj1}, i.e., the inequality $\beta_1^{(2)}(G) \leq k-1$ is known to be true, see Remark \ref{lack}.
In this note, we give a proof of a variation of this conjecture, which also applies to the non-torsionfree case. In Theorem \ref{main1} we show that if $G$ is an infinite, finitely presented, residually $p$-finite group for some prime $p$, and if $G$ is normally generated by elements $g_1,\dots,g_k \in G$ of order $n_1,\dots,n_k \in \{1,2,\dots \} \cup \{\infty\}$, 
then 
\begin{equation}
\beta_{1}^{(2)}(G)  \leq k - 1 - \sum_{i=1}^k \frac1{n_i}.
\end{equation}

The proof is based on some elementary calculations with cocycles on $G$ taking values in $\C[G/H]$, for $H \subset G$ a normal subgroup of finite index, and L\"uck's Approximation Theorem \cite{lueckl2}.

In Section \ref{uncertsec} we prove that if a $k$-generated group $G$ satisfies $\beta_1^{(2)}(G) \geq k-1-\varepsilon$, then the shortest relation in terms of the generators must have length at least $1/\varepsilon$. A theorem of Pichot \cite{pichot} already implied that the girth of the Cayley graph of $G$ with respect to the natural generating set becomes larger and larger if $\varepsilon$ is getting smaller and smaller. Our main result is a quantitative estimate that implies this qualitative result.

\section{Residually $p$-finite groups}

In this section we want to recall some basic results on the class of residually $p$-finite groups and show that various natural classes of groups are contained in this class of groups. Let us first recall some definitions.

\begin{definition} Let $p$ be a prime number.
A group $G$ is said to be residually $p$-finite, if for every non-trivial element $g \in G$, there exists a normal subgroup $H \subset G$ of p-power index such that $g \not \in H$. A group $G$ is called virtually residually $p$-finite if it admits a residually $p$-finite subgroup of finite index.
\end{definition}

The following result relates residually $p$-finiteness to residual nilpotence and gives a large class of examples of groups which are residually $p$-finite.

\begin{theorem}[Gruenberg]
Let $G$ be a finitely generated group. 
If $G$ is torsionfree and residually nilpotent, then it is residually $p$-finite for any prime $p$.
\end{theorem}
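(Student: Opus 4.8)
The plan is to deduce the statement from the special case of finitely generated torsionfree \emph{nilpotent} groups, which I would isolate as the main lemma: every finitely generated torsionfree nilpotent group $N$ is residually $p$-finite for every prime $p$. Granting this lemma, the argument runs as follows. Since $G$ is residually nilpotent, $\bigcap_n \gamma_n(G) = \{1\}$, where $\gamma_n(G)$ denotes the lower central series. Hence for a given nontrivial $g \in G$ there is an $n$ with $g \notin \gamma_n(G)$, and $Q := G/\gamma_n(G)$ is a finitely generated nilpotent group in which the image of $g$ is nontrivial. If $Q$ were torsionfree the lemma would finish the argument at once, so the real work is to produce a torsionfree nilpotent quotient of $G$ in which $g$ still survives.

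To prove the lemma I would invoke the classical theorem of Mal'cev and Jennings that a finitely generated torsionfree nilpotent group $N$ embeds into the group $\mathrm{UT}_h(\Z)$ of upper unitriangular integer matrices for some $h$. The reduction maps $\mathrm{UT}_h(\Z) \to \mathrm{UT}_h(\Z/p^k\Z)$ have image a finite $p$-group, and any nonidentity unitriangular integer matrix has a superdiagonal entry which is nonzero, hence nonzero modulo $p^k$ once $k$ is large; thus $\bigcap_k \ker\bigl(\mathrm{UT}_h(\Z) \to \mathrm{UT}_h(\Z/p^k\Z)\bigr) = \{1\}$ and $\mathrm{UT}_h(\Z)$ is residually $p$-finite. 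As residual $p$-finiteness passes to subgroups, $N$ is residually $p$-finite as well. Alternatively one can induct on the nilpotency class, using that in a torsionfree nilpotent group the upper central factors $Z_{i+1}/Z_i$ are finitely generated and torsionfree, peeling off a central $\Z^r$ at each stage.

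The heart of the matter, and the step I expect to be the main obstacle, is the passage from nilpotent to torsionfree nilpotent quotients. In the finitely generated nilpotent group $Q = G/\gamma_n(G)$ the torsion elements form a finite characteristic subgroup $T_n$, and $Q/T_n$ is torsionfree nilpotent; equivalently, letting $I_n \subseteq G$ be the isolator of $\gamma_n(G)$, i.e. the preimage of $T_n$, the quotient $G/I_n$ is finitely generated torsionfree nilpotent and hence residually $p$-finite by the lemma. The $I_n$ form a descending chain with $I_n \supseteq \gamma_n(G)$, so it suffices to show $\bigcap_n I_n = \{1\}$, i.e. that $G$ is residually torsionfree nilpotent; then any nontrivial $g$ lies outside some $I_n$, survives in $G/I_n$, and the lemma produces the desired finite $p$-quotient.

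To establish $\bigcap_n I_n = \{1\}$ I would fix $g \in \bigcap_n I_n$, so that the image of $g$ in each $G/\gamma_n(G)$ has finite order $d_n$ with $d_n \mid d_{n+1}$. If the $d_n$ are bounded, hence eventually constant equal to some $d$, then $g^d \in \bigcap_n \gamma_n(G) = \{1\}$, and torsionfreeness of $G$ forces $g = 1$. The genuinely delicate case is $d_n \to \infty$: here I would exploit finite generation together with the structure of the fixed lower central factors $\gamma_i(G)/\gamma_{i+1}(G)$, which simultaneously control the torsion of all the groups $G/\gamma_n(G)$, to bound the relevant orders and again reduce to the torsionfree hypothesis. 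Controlling this unbounded case is precisely where the finiteness afforded by finite generation, and not mere residual nilpotence, is indispensable; that some such hypothesis beyond residual nilpotence is unavoidable is already shown by a cyclic group of prime order distinct from $p$, which is nilpotent yet not residually $p$-finite.
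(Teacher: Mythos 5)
Your reduction to the key lemma --- finitely generated torsion-free \emph{nilpotent} groups are residually $p$ for every prime $p$, via the Mal'cev--Jennings embedding into $\mathrm{UT}_h(\Z)$ --- is correct, and that lemma is in fact the precise content of Gruenberg's theorem. The genuine gap is exactly the step you yourself flag as the heart of the matter and leave open: the claim that $\bigcap_n I_n = \{1\}$, i.e.\ that a finitely generated torsion-free residually nilpotent group is residually \emph{torsion-free} nilpotent. This is false, and the ``genuinely delicate case'' $d_n \to \infty$ cannot be closed by any argument. Consider the Klein bottle group $G = \langle a,b \mid bab^{-1} = a^{-1}\rangle$. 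It is finitely generated and torsion-free, and since $[a,b] = a^2$ one computes $\gamma_{n+1}(G) = \langle a^{2^{n}}\rangle$, so $\bigcap_n \gamma_n(G) = \{1\}$ and $G$ is residually nilpotent (indeed residually a finite $2$-group). But the image of $a$ in $G/\gamma_{n+1}(G)$ has order $2^{n} \to \infty$, so $a$ lies in every isolator $I_n$ and $G$ is not residually torsion-free nilpotent. Worse, $G$ is not residually $p$ for any odd prime $p$: in any finite $p$-group quotient the relation $\bar a^{2} = [\bar a, \bar b]$ forces, by induction on $k$, that $\bar a \in \gamma_k$ for all $k$ (squaring is bijective on abelian groups of odd order, so $\bar a^2 \in \gamma_{k+1}$ and $\bar a \in \gamma_k$ imply $\bar a \in \gamma_{k+1}$), whence $\bar a = 1$.

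So the obstacle you isolate is not a technical one: the statement as quoted in the paper, with ``residually nilpotent'' in place of ``nilpotent,'' is not correct as it stands, and the unbounded-order case is precisely where every proof must break. What Gruenberg actually proved --- and all the paper uses --- is your lemma (equivalently, the conclusion for finitely generated groups that are residually (torsion-free nilpotent)). The paper states the theorem with a citation and gives no proof, so there is no internal argument to compare against; your proof of the lemma is sound, your bounded-order case is handled correctly using torsion-freeness, and your honest admission that the remaining case is the real difficulty lands exactly on the point where the assertion fails.
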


Another source of residually $p$-finite groups is a result by Platonov, see \cite{MR0231897}, which says that any finitely generated linear group is virtually residually $p$-finite for almost all primes $p$.
In \cite{MR3100378}, Aschenbrenner-Friedl showed that the same is true for fundamental groups of $3$-manifolds.
Gilbert Baumslag showed \cite{MR0212078} that any one-relator group where the relator is a $p$-power is residually $p$-finite.

We denote the group ring of $G$ with coefficients in a ring $R$ by $RG$. Its elements are formal finite linear combinations of the form $\sum_g a_g g$ with $a_g \in R$. The natural multiplication on $G$ extends to $RG$. The natural homomorphism $\varepsilon \colon RG \to R$ given by 
$$\varepsilon\left(\sum_{g \in G} a_g g\right) := \sum_{g \in G} a_g$$
is called augmentation. We denote by $\omega_R$ the kernel of $\varepsilon \colon RG \to R$; the so-called augmentation ideal.

In the proof of our main result, we will use the following characterization of finite $p$-groups that was obtained by Karl Gruenberg, see \cite{MR0159881} and also \cites{MR0068540, MR0272895}, will play an important role.

\begin{theorem}[Gruenberg] \label{gr1}
Let $G$ be a finite group and let $\omega_{\Z} \subset \Z G$ be the augmentation ideal. The group $G$ is of prime-power order if and only if 
$$\bigcap_{n=1}^{\infty} \omega^n = \{0\}.$$
\end{theorem}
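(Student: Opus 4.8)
The plan is to prove the two implications separately. Throughout I write $\omega=\omega_{\Z}$ for the integral augmentation ideal, recall that $\omega$ is free abelian on $\{g-1 : g\neq 1\}$ and that $\bigcap_{n}\omega^{n}$ is a two-sided ideal; I also record the identity $\sigma\alpha=\varepsilon(\alpha)\sigma$ for $\sigma=\sum_{g\in G}g$, so that $\omega\sigma=\sigma\omega=0$.

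For the implication ``$p$-group $\Rightarrow$ trivial intersection'' I would first reduce modulo $p$. The key input is that the augmentation ideal $\bar\omega$ of $\mathbb{F}_{p}G$ is \emph{nilpotent} when $G$ is a finite $p$-group; this I would prove by induction on $|G|$, choosing a central element $z$ of order $p$, noting $(z-1)^{p}=z^{p}-1=0$ in characteristic $p$, so that the kernel of $\mathbb{F}_{p}G\to\mathbb{F}_{p}[G/\langle z\rangle]$ is a nilpotent ideal, and applying the inductive hypothesis to $G/\langle z\rangle$. Nilpotency gives $\omega^{m}\subseteq p\,\Z G$ for some $m$, and since $p\,\Z G\cap\omega=p\omega$ this improves to $\omega^{m}\subseteq p\omega$. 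A trivial induction then yields $\omega^{km}\subseteq p^{k}\omega\subseteq p^{k}\Z G$, whence $\bigcap_{n}\omega^{n}\subseteq\bigcap_{k}p^{k}\Z G=0$ because $\Z G$ is a free abelian group.

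For the converse I argue by contraposition: assuming $G$ is not a $p$-group I must exhibit a nonzero element of $\bigcap_{n}\omega^{n}$. The basic mechanism is that if $\alpha\in\omega$ and $0\neq\xi$ satisfy $\alpha\xi=\xi$, then $\xi=\alpha^{n}\xi\in\omega^{n}$ for every $n$. When $G$ contains an element $g$ whose order is not a prime power, I would produce such data inside the cyclic subgroup $\langle g\rangle$ and transport it upward: the inclusion $\Z\langle g\rangle\hookrightarrow\Z G$ is an injective ring homomorphism carrying $\omega_{\langle g\rangle}^{n}$ into $\omega^{n}$, so a nonzero element of $\bigcap_{n}\omega_{\langle g\rangle}^{n}$ maps to a nonzero element of $\bigcap_{n}\omega^{n}$. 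In the cyclic case $\Z\langle g\rangle=\Z[x]/(x^{N}-1)$ I would fix a divisor $d\mid N$ that is not a prime power and set $\Psi=(x^{N}-1)/\Phi_{d}=\prod_{d'\neq d}\Phi_{d'}$. Because $d$ is not a prime power the cyclotomic integer $\zeta_{d}-1$ is a \emph{unit}, so there is $P\in\Z[x]$ with $(x-1)P\equiv 1\pmod{\Phi_{d}}$; multiplying by $\Psi$ and reducing modulo $x^{N}-1$ gives the identity $\Psi=(x-1)P\Psi$ in $\Z\langle g\rangle$. Iterating, $\Psi=(x-1)^{n}P^{n}\Psi\in\omega^{n}$ for all $n$, while $\Psi\neq0$ and $\Psi(1)=0$; thus $\Psi$ is the desired nonzero element. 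Such a divisor $d$ exists whenever $|\langle g\rangle|$ is not a prime power, and every finite abelian group of non--prime-power order does contain an element of non--prime-power order, so this settles, in particular, all abelian $G$.

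The main obstacle is the remaining case, in which $G$ is not a $p$-group yet \emph{every} element of $G$ has prime-power order (for example $G=S_{3}$); here the cyclic reduction is unavailable. My approach would be to work one prime at a time. Fixing a prime $p\mid|G|$, Cauchy's theorem gives an element $h$ of some prime order $q\neq p$, and since $q$ is invertible in $\Z_{p}$ the element $\tilde e=1-\tfrac1q\sum_{i=0}^{q-1}h^{i}\in\Z_{p}G$ is a nonzero idempotent with $\varepsilon(\tilde e)=0$; being an idempotent lying in $\omega_{\Z_{p}}$ it lies in $\bigcap_{n}\omega_{\Z_{p}}^{n}$, so the intersection is nonzero \emph{$p$-adically}. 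The genuine difficulty, and the technical heart of Gruenberg's theorem, is to \emph{descend} from $\Z_{p}$ to $\Z$: the idempotent $\tilde e$ is only $p$-integral, and one must instead assemble the local contributions at all primes dividing $|G|$---equivalently, produce a rational element of $\omega$ that $p$-locally avoids the principal block for every such prime---to obtain an honest element of $\Z G$ in $\bigcap_{n}\omega^{n}$. This block-theoretic descent is exactly the step carried out in the references \cites{MR0159881, MR0068540, MR0272895}, and it is where I expect the real work to lie.
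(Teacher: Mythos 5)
The paper offers no proof of this statement at all---it is quoted as a black box with references to Gruenberg, Jennings, and Formanek---so there is no internal argument to compare your route against; I can only assess the proposal on its own terms. Your forward direction is complete and correct: the induction on $|G|$ showing that the augmentation ideal of $\mathbb{F}_pG$ is nilpotent for a finite $p$-group, the lift to $\omega^{km}\subseteq p^{k}\,\Z G$, and the conclusion from $\bigcap_{k}p^{k}\,\Z G=0$ are all sound. Note that this is the only implication the paper actually uses: in the proof of Theorem \ref{main1} one needs $\bigcap_{m}\omega^{m}=0$ for the $p$-group $G/H$ and nothing else, so for the purposes of this note your argument would suffice.

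As a proof of the stated equivalence, however, there is a genuine gap, and you have correctly located it yourself. Your converse covers only groups containing an element of non-prime-power order: the cyclotomic computation (using $\Phi_d(1)=1$ when $d$ has at least two distinct prime divisors, hence $x-1$ invertible modulo $\Phi_d$, hence $\Psi=(x-1)^{n}P^{n}\Psi\in\omega^{n}$ for all $n$) is correct and settles, for instance, all abelian and all nilpotent $G$. But there exist finite groups of non-prime-power order in which \emph{every} element has prime-power order ($S_3$, $A_5$, ${\rm PSL}(2,8)$, the Suzuki groups, \dots), and for these the reduction to a cyclic subgroup is unavailable. The $p$-adic idempotent $1-\tfrac1q\sum_{i}h^{i}$ only shows that $\bigcap_{n}\omega_{\Z_p}^{n}\neq 0$ after extending scalars; it is not an element of $\Z G$, and no descent to $\Z$ is carried out. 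So the proposal proves the implication the paper needs plus a special case of the converse, but not the full theorem; the missing case is precisely where the cited references do their work.
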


%It is easy to see that this implies that the augmentation ideal of $\Z G$ for any residually $p$-finite group has the same property, i.e., $\cap_{n=1}^{\infty} \omega^n = \{0\}$.
%
%\begin{theorem} Let $G$ be a residually $p$-finite group, which is normally generated by some subgroup $\Lambda$. If $G$ has infinitely many ends, the $\Lambda$ also has infinitely many ends.
%\end{theorem}
%

\section{$\ell^2$-invariants of groups}

\subsection{Some definitions} $\ell^2$-invariants of fundamental groups of compact aspherical manifolds where introduced by Atiyah in \cite{atiyah}. A definition which works for all discrete groups was given by Cheeger-Gromov in \cite{cheegro}. Later, a more algebraic framework was presented by L\"uck in \cite{lueckl2}. We want to stick to this more algebraic approach. 

Let $G$ be a group and denote by $\C G$ the complex group ring. Note that the ring $\C G$ comes with a natural involution $f \mapsto f^*$ which is given by the formula$$(\sum_{g \in G}a_g g)^{*} = \sum_{g \in G} \bar a_g g^{-1}.$$
We denote by $\tau \colon \mathbb \C G \to \mathbb C$ the natural trace on $\mathbb C G$, given by the formula $$\tau\left(\sum_{g \in G} a_g g \right) = a_e.$$ It satisfies $\tau(f^*f) \geq 0$ for all $f \in \C G$ and the associated GNS-representation is just the Hilbert space $\ell^2 G$ with orthonormal basis $\{\delta_g \mid g \in G\}$ on which $G$ (and hence $\C G$) acts via the left-regular representation. More explicitly, there exists a unitary representation $\lambda \colon G \to U(\ell^2 G)$ and $\lambda(g) \delta_h = \delta_{gh}$.
Similar to the left-regular representation, there is a right-regular representation $\rho \colon G \to U(\ell^2 G$), given by the formula $\rho(g) \delta_h = \delta_{hg^{-1}}$.

The group von Neumann algebra of a group is defined as
$$L G := B(\ell^2 G)^{\rho(G)} = \{T \in B(\ell^2 G) \mid \rho(g)T=T\rho(g), \forall g \in G\}.$$
It is obvious that $\lambda(\C G) \subset LG$, in fact it is dense in the topology of pointwise convergence on $\ell^2G$.
Recall that the trace $\tau$ extends to a positive and faithful trace on $LG$ via the formula
$$\tau(a) = \langle a \delta_e, \delta_e \rangle.$$
For each $\rho(G)$-invariant closed subspace $K \subset \ell^2 G$, we denote by $p_K$ the orthogonal projection onto $K$. It is easily seen that $p_K \in LG$. We set $\dim_G K := \tau(p_K) \in [0,1]$. The quantity $\dim_GK$ is called Murray-von Neumann dimension of $K$. L\"uck proved that there is a natural dimension function
$${\rm \dim_{LG}}\colon LG \mbox{-modules } \to [0,\infty]$$
satisfying various natural properties, see \cite{lueckl2}, such that $\dim_{LG} K = \dim_G K$ for every $\rho(G)$-invariant subspace of $\ell^2G$.
 
\vspace{0.1cm}

We can now set
$$\beta_1^{(2)}(\Gamma):= \dim_{L\Gamma} H^1(\Gamma,L\Gamma),$$
where the group on the right side is the algebraic group homology of $\Gamma$ with coefficients in the left $\Z\Gamma$-module $L\Gamma$. Since the cohomology group inherits a right $L\Gamma$-module structure a dimension can be defined.
\begin{remark}
The usual definition of $\ell^2$-Betti numbers uses the group homology rather than the cohomology. Also, usually $\ell^2 G$ is used instead of $LG$. That the various definitions coincide was shown in \cite{pettho}.
\end{remark}

\subsection{L\"uck's Approximation Theorem}A striking result, due to L\"uck, states that for a finitely presented and residually finite group, the first $\ell^2$-Betti number is a normalized limit of ordinary Betti-numbers for a chain of subgroups of finite index, see \cite{lueckl2} for a proof. The result says more precisely:

\begin{theorem}[L\"uck] \label{lueck}
Let $G$ be a residually finite and finitely presented group. Let $\dots \subset H_{n+1} \subset H_n \subset \dots \subset G$ be a chain of finite index normal subgroups such that $\cap_{n=1}^{\infty} H_n =\{e\}$. Then,
$$\beta_1^{(2)}(G) = \lim_{n \to \infty} \frac{{\rm rk}((H_n)_{\rm ab})}{[G:H_n]} =\lim_{n \to \infty} \frac{\dim_\C \, H^1(G,\Z[G/H_n]) \otimes_{\Z} \C}{[G:H_n]}.$$
\end{theorem}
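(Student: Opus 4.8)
The plan is to reduce the theorem to a single approximation statement for kernels of matrices over $\Z G$ and then prove that statement by a spectral-measure argument. First I would dispose of the equality of the two limits on the right-hand side. Since $H_n$ has finite index in $G$, one has $\Z[G/H_n] = \mathrm{Ind}_{H_n}^G \Z \cong \mathrm{Coind}_{H_n}^G \Z$, so Shapiro's lemma gives $H^1(G,\Z[G/H_n]) \cong H^1(H_n,\Z) = \mathrm{Hom}(H_n,\Z) \cong \mathrm{Hom}((H_n)_{\mathrm{ab}},\Z)$. As $\mathrm{Hom}(-,\Z)$ kills torsion, the free abelian group on the right has rank $\rank((H_n)_{\mathrm{ab}})$, whence $\dim_\C H^1(G,\Z[G/H_n]) \otimes_{\Z} \C = \rank((H_n)_{\mathrm{ab}})$ and the two limits agree term by term.

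Next I would translate both sides into dimensions of kernels of differentials. Fix a finite presentation of $G$ and let $X$ be its presentation $2$-complex, whose universal cover has cellular chain complex
$$ \Z G^r \xrightarrow{\partial_2} \Z G^g \xrightarrow{\partial_1} \Z G, $$
with $\partial_1$ given by the elements $g_i-1$ and $\partial_2$ by Fox derivatives, hence by explicit matrices over $\Z G$. Passing to $\ell^2$-completions, $\beta_1^{(2)}(G) = \dim_{LG}\ker\partial_1^{(2)} - \dim_{LG}\overline{\im\partial_2^{(2)}}$. The finite cover $X_n = \tilde X/H_n$ has $\pi_1(X_n)=H_n$, so $H_1(X_n;\Z)=(H_n)_{\mathrm{ab}}$ and $b_1(X_n)=\rank((H_n)_{\mathrm{ab}})$; its chain complex is obtained by applying $\C[G/H_n]\otimes_{\Z G}-$ to the complex above. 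Using rank--nullity in both the von Neumann and the finite-dimensional settings, the assembly reduces everything to proving, for every $A\in M_{s,t}(\Z G)$ with reduction $A_n$ modulo $H_n$, the core identity
$$ \dim_{LG}\ker\big(A\colon \ell^2 G^t \to \ell^2 G^s\big) = \lim_{n\to\infty} \frac{\dim_\C\ker\big(A_n\colon \C[G/H_n]^t \to \C[G/H_n]^s\big)}{[G:H_n]}. $$

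For this I would pass to $\Delta = A^*A$ and its reductions $\Delta_n = A_n^*A_n$, noting $\ker A = \ker\Delta$ and $\ker A_n = \ker\Delta_n$. Let $\mu$ be the spectral measure of $\Delta$ with respect to $\tau$ (extended to matrices by summing diagonal traces), and $\mu_n$ that of $\Delta_n$ with respect to the normalized trace $\frac{1}{[G:H_n]}\mathrm{Tr}$; these are finite positive measures on $[0,\|\Delta\|]$ of equal total mass $t$, with $\mu(\{0\})=\dim_{LG}\ker\Delta$ and $\mu_n(\{0\})=\dim_\C\ker\Delta_n/[G:H_n]$. The moments converge: $\int x^p\,d\mu_n = \frac{1}{[G:H_n]}\mathrm{Tr}(\Delta_n^p)$ is the value of the normalized trace on the reduction of $\Delta^p\in M_t(\Z G)$, and since $\Delta^p$ has finite support and $\bigcap_n H_n=\{e\}$, for large $n$ only the identity contributes, so this equals $\tau(\Delta^p)=\int x^p\,d\mu$. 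Convergence of all moments on a fixed compact interval yields weak convergence $\mu_n\to\mu$, and since $\{0\}$ is closed the Portmanteau theorem already gives $\limsup_n\mu_n(\{0\})\le\mu(\{0\})$.

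The main obstacle is the reverse inequality, since weak convergence does not control atoms from below and mass of $\mu_n$ could a priori pile up just above $0$. Here integrality is decisive: in the standard basis of $\C[G/H_n]^t$ the operator $\Delta_n$ is a symmetric positive semidefinite integer matrix, so the product of its nonzero eigenvalues is a positive integer, and taking logarithms gives the uniform bound $\int_{(0,\infty)}\log x\,d\mu_n(x)\ge 0$. Splitting this integral at a threshold $\varepsilon\in(0,1)$ and using $\log x\le\log\varepsilon$ on $(0,\varepsilon]$ together with $\log x\le C$ on $(\varepsilon,\|\Delta\|]$ produces an estimate $\mu_n((0,\varepsilon])\le C/|\log\varepsilon|$ uniform in $n$. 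Combining this with $\liminf_n\mu_n([0,\varepsilon))\ge\mu([0,\varepsilon))\ge\mu(\{0\})$ from weak convergence on the relatively open set $[0,\varepsilon)$, and with $\mu_n(\{0\})\ge\mu_n([0,\varepsilon))-\mu_n((0,\varepsilon])$, yields $\liminf_n\mu_n(\{0\})\ge\mu(\{0\})-C/|\log\varepsilon|$; letting $\varepsilon\to 0$ finishes $\lim_n\mu_n(\{0\})=\mu(\{0\})$. Feeding the core identity back through the assembly proves $\beta_1^{(2)}(G)=\lim_n \rank((H_n)_{\mathrm{ab}})/[G:H_n]$. I expect the uniform log-determinant estimate to be the one genuinely delicate point, with everything else being either homological bookkeeping or soft measure theory.
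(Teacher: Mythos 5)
Your proposal is correct, but note that the paper offers no proof of this statement at all: it is quoted as L\"uck's Approximation Theorem with a pointer to the literature, and what you have written is essentially the standard argument from the original source (Shapiro's lemma to identify $H^1(G,\Z[G/H_n])$ with $\mathrm{Hom}((H_n)_{\rm ab},\Z)$, reduction to kernel approximation for matrices over $\Z G$ via the presentation $2$-complex, moment convergence of spectral measures, and the integrality/log-determinant bound to rule out escape of mass to $0^+$). The one point to tighten is your claim that all $\mu_n$ live on $[0,\|\Delta\|]$: the representations $\C[G/H_n]$ need not be weakly contained in $\ell^2 G$, so the operator norm of $\Delta_n$ is not controlled by $\|\Delta\|$; you should instead use the uniform bound $\|\Delta_n\|\leq \|\Delta\|_1$ (sum of the $1$-norms of the matrix entries), which is what makes both the moment-to-weak-convergence step and the $\log x\leq C$ estimate on $(\varepsilon,K]$ legitimate. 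With that adjustment the argument is complete and is exactly the proof the paper delegates to the reference.
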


This result has numerous applications and extensions, we call it L\"uck's Approximation Theorem.

\subsection{Lower bounds on the first $\ell^2$-Betti number}

It is well known that the first $\ell^2$-Betti number of a finitely generated group $G$ is bounded from above by the number of generators of the group minus one. 
A more careful count reveals that a generator of order $n$ counts only $1 - \frac1n$.
Similarly, lower bounds can be found in terms of the order of the imposed relations in some presentation. More precesily, we find:
\begin{theorem} \label{posbetti} Let $G$ be an infinite 
countable discrete group. Assume that there exist subgroups $G_1,\dots,G_n$,
such that
$$G = \langle G_1, \dots , G_n \mid  r_1^{w_1},\dots,r_k^{w_k}, \dots \rangle,$$
for elements $r_1,\dots,r_k \in G_1 \ast \cdots \ast G_n$ and positive integers $w_1,\dots,w_k$.
We assume that the presentation is irredundant in the sense that $r_i^l \neq e \in G$, for $1<l < w_i$ and $1 \leq i < \infty$.
Then, the following inequality holds:
$$\beta_{1}^{(2)}(G) \geq n-1 + \sum_{i=1}^n \left( \beta_1^{(2)}(G_i) - \frac{1}{|G_i|}\right) - \sum_{j=1}^\infty \frac{1}{w_j}. $$
\end{theorem}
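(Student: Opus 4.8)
The plan is to establish this lower bound on the first $\ell^2$-Betti number by constructing an explicit group cocycle (or a family thereof) whose associated submodule of $H^1(G, LG)$ has controlled von Neumann dimension, and then to estimate this dimension from below by counting the "degrees of freedom" coming from the free product structure $G_1 \ast \cdots \ast G_n$ and subtracting the constraints imposed by the relators $r_j^{w_j}$. Concretely, I would work with the Fox-derivative / Cayley-complex picture: a presentation with generating subgroups $G_1,\dots,G_n$ and relators $r_j^{w_j}$ gives rise to a partial chain complex of free $\Z G$-modules, and $\beta_1^{(2)}(G)$ is bounded below by $\dim_{LG}$ of the kernel of the relation map minus the rank contributed by the generators, via the additivity and the Euler-characteristic-type inequalities for the L\"uck dimension function.

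\emph{Approach via Euler characteristic of the presentation complex.} First I would model the presentation by the standard $2$-complex $X$: one $0$-cell, a wedge of the classifying spaces (or rather the $1$- and $2$-skeleta) of the $G_i$ attached at the basepoint, and one $2$-cell for each relator $r_j^{w_j}$. Since $G_i$ may have torsion, the correct bookkeeping is not a free resolution but the $\ell^2$-Euler characteristic, where each generating subgroup $G_i$ contributes its own $\ell^2$-Betti data. The key numerical identity I expect to exploit is the additivity of $\beta^{(2)}$ over free products: for the free product $F = G_1 \ast \cdots \ast G_n$ one has
$$\beta_1^{(2)}(F) = n - 1 + \sum_{i=1}^n \left(\beta_1^{(2)}(G_i) - \frac{1}{|G_i|}\right),$$
which is exactly the first two terms on the right-hand side. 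So the heart of the matter reduces to controlling the \emph{change} in $\beta_1^{(2)}$ when one passes from $F$ to the quotient $G = F / \langle\!\langle r_1^{w_1}, r_2^{w_2}, \dots \rangle\!\rangle$ by imposing the relators.

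\emph{Controlling the cost of each relator.} Imposing a single relation $r_j^{w_j} = e$ corresponds to attaching a $2$-cell, i.e.\ taking a pushout that caps off a loop representing $r_j^{w_j}$. The expected estimate is that each such relation can decrease $\beta_1^{(2)}$ by at most $1/w_j$, and this is precisely where the irredundancy hypothesis $r_i^l \neq e$ for $1 < l < w_i$ enters: it guarantees that the cyclic subgroup generated by the image of $r_j$ in $G$ is either infinite or has order exactly divisible by $w_j$, so that the attaching map for the $2$-cell is "wound $w_j$ times" and the relation module it generates has von Neumann dimension at most $1/w_j$ rather than $1$. I would make this rigorous by the Mayer--Vietoris / long exact sequence in $\ell^2$-cohomology associated to the one-cell attachment, together with the fact that for the cyclic group $\Z/w_j$ (or $\Z$) one has $\beta_0^{(2)} = 1/w_j$ (resp.\ $0$); the upper bound on the dimension of the image of the $2$-cell boundary map is then $1/w_j$. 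Summing over $j$ and using lower semicontinuity / additivity of $\dim_{LG}$ under the infinite attachment yields the subtracted sum $\sum_j 1/w_j$.

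\emph{Main obstacle.} The hardest part will be handling the \emph{infinitely many} relators rigorously and in the torsion case simultaneously. For finitely many relators one can argue cell by cell with exact sequences, but with infinitely many relations one must pass to a limit and ensure that the L\"uck dimension behaves well — here the cofinality and continuity properties of $\dim_{LG}$ (cofinality under increasing unions, and the inequality $\dim_{LG}$ of a colimit $\le \liminf$) are essential, and one must check that no "dimension escapes to infinity." The irredundancy condition must be used uniformly to bound each relator's contribution by $1/w_j$ independently of the others, and I would need the convergence of $\sum_j 1/w_j$ to be compatible with the finiteness of $\beta_1^{(2)}(G)$ (otherwise the statement is vacuous). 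I anticipate that the cleanest route avoids building a global resolution and instead argues via L\"uck's Approximation Theorem \ref{lueck}: approximate $G$ by finite quotients $G/H_m$, count $\operatorname{rk}((H_m)_{\mathrm{ab}})$ directly using the presentation (generators contribute, relators subtract at most $[G:H_m]/w_j$ each by the irredundancy), divide by $[G:H_m]$, and pass to the limit. This reduces everything to an elementary rank count in finite-index subgroups, which sidesteps the infinite-complex subtleties and is likely the approach the author intends.
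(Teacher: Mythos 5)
First, a point of reference: the paper does not actually prove Theorem \ref{posbetti} --- it states it and cites \cite{pettho} for the proof --- so the comparison is with the argument given there. Your opening diagnosis matches that argument: the terms $n-1+\sum_i\bigl(\beta_1^{(2)}(G_i)-1/|G_i|\bigr)$ come from free-product additivity of the first $\ell^2$-Betti number, and each relator $r_j^{w_j}$ costs at most $1/w_j$ because a cocycle $c$ on $G_1\ast\cdots\ast G_n$ descends to $G$ precisely when $c(r_j^{w_j})=\bigl(\sum_{l=0}^{w_j-1} r_j^l\bigr)c(r_j)=0$ for all $j$, and irredundancy forces $r_j$ to have order exactly $w_j$ in $G$, so that $\frac{1}{w_j}\sum_{l=0}^{w_j-1}\lambda(r_j)^l$ is a projection of trace $1/w_j$ and each constraint map has image of $\dim_{LG}$ at most $1/w_j$.

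However, the route you declare ``cleanest'' and ultimately settle on --- L\"uck approximation --- cannot prove the theorem as stated. The hypotheses of Theorem \ref{lueck} (residual finiteness and finite presentability) are not part of the hypotheses here: $G$ is an arbitrary infinite countable group, with possibly infinitely many relators. Moreover, even when suitable finite quotients exist, the per-relator count goes the wrong way for a lower bound: in $G/H_m$ the image $\bar r_j$ may have order $d_j$ a proper divisor of $w_j$, in which case the $j$-th relator can kill a subspace of rank $[G:H_m]/d_j\geq [G:H_m]/w_j$, so the elementary count only gives $b_1\geq (\text{generators})-\sum_j [G:H_m]/d_j$, which is weaker than needed; with infinitely many relators one cannot arrange $d_j=w_j$ for all $j$ simultaneously by passing to deeper subgroups. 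The viable route is the one you sketch first and then abandon: work directly with cocycles on the free product with coefficients in $LG$ (or the ring of affiliated operators), compute the dimension of that cocycle space by free-product additivity, and cut it down by the constraint maps $c\mapsto\bigl(\sum_{l}r_j^l\bigr)c(r_j)$, each of image dimension at most $1/w_j$; the infinitely many relators are then handled by the continuity and cofinality properties of L\"uck's extended dimension function, not by approximation. That is precisely the delicate point you flag but do not resolve, and it is where the proof in \cite{pettho} does its real work.
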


A proof of this result was given in \cite{pettho}. It can be used in many cases already if the groups $G_i$ are isomorphic to $\Z$ or $\Z/p\Z$, see for example \cite{thomosin}.

Another result says that the set of marked groups with first $\ell^2$-Betti number greater or equal so some constant is closed in Chabouty's space of marked groups, see \cite{pichot} for definitions and further references. More precisely, we have:

\begin{theorem}[Pichot, see \cite{pichot}]
Let $((G_n,S))_{n \in \N}$ be a convergent sequence of marked groups in Chabouty's space of marked groups. Then,
$$\beta_1^{(2)}(G) \geq \limsup_{n \to \infty} \beta_1^{(2)}(G_n).$$
\end{theorem}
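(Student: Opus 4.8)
The plan is to realise each marked group as a quotient of the free group $F := F_k$ on the generating $k$-tuple $S$, and to convert Chabouty convergence into convergence of traces on the group ring $\C F$. Writing $G_n = F/N_n$ and $G = F/N$ with $N_n, N \trianglelefteq F$, convergence in Chabouty's space means precisely that for every word $w \in F$ one has $w \in N_n \Leftrightarrow w \in N$ for all large $n$; equivalently, the traces $\tau_n$ on $\C G_n$ and $\tau$ on $\C G$, pulled back to $\C F$, satisfy $\tau_n(x) \to \tau(x)$ for every $x \in \C F$. Thus the tracial von Neumann algebras $(LG_n, \tau_n)$ converge to $(LG, \tau)$ in the sense that all moments of a fixed element of $\C F$ converge. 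This is the only input extracted from the hypothesis, and it is exactly the setting of L\"uck's approximation argument.

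The next step is to rewrite $\beta_1^{(2)}$ as the codimension of a relation module. Using that $\beta_1^{(2)}(G) = \dim_{LG} \overline{H}^1(G, \ell^2 G)$ (reduced $\ell^2$-cohomology computes the $\ell^2$-Betti number), a $1$-cocycle $c \colon G \to \ell^2 G$ is determined by its values $(c(s_i))_i \in (\ell^2 G)^k$, subject to the linear constraints $\sum_i \overline{(\partial r/\partial s_i)}\, c(s_i) = 0$, one for each relation $r \in N$, where $\partial r/\partial s_i \in \Z F$ are the Fox derivatives. Hence the cocycle space is the kernel of the Fox--Jacobian operator, and an elementary computation with adjoints gives, for infinite $G$,
$$\beta_1^{(2)}(G) = k - 1 - \dim_{LG} M(G),$$
where $M(G) \subseteq (\ell^2 G)^k$ is the closure of the right $LG$-module generated by the Fox--Jacobian vectors $v_r := \big(\overline{(\partial r/\partial s_i)^*}\big)_i$ for $r \in N$. (For finite $G$ one inserts the usual $\beta_0^{(2)}$-correction.) The inequality to be proved is therefore equivalent to the \emph{lower} semicontinuity $\dim_{LG} M(G) \leq \liminf_n \dim_{LG_n} M(G_n)$ of the relation module.

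To establish this I would exploit that $\dim$ is continuous along increasing unions: $\dim_{LG} M(G) = \sup_R \dim_{LG} M_R(G)$, the supremum over finite subsets $R \subset N$, where $M_R$ is generated by $\{v_r : r \in R\}$. Fix $\varepsilon > 0$ and a finite $R$ with $\dim_{LG} M_R(G) \geq \dim_{LG} M(G) - \varepsilon$. Since relations stabilise, $R \subset N_n$ for all large $n$, so $M_R(G_n) \subseteq M(G_n)$. Now $M_R$ is the image of one \emph{fixed} matrix $\Psi_R$ over $\C F$, independent of $n$; its moments over $\C F$ converge by the first step, and since the entries lie in $\C F$ the operator norm of $\Psi_R$ over $LG_n$ is dominated by its norm over the full group $C^*$-algebra of $F$, uniformly in $n$, so the spectral measures of $\Psi_R^* \Psi_R$ converge weakly. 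The atom at $0$ is upper semicontinuous for the closed set $\{0\}$ under weak convergence (portmanteau), so the kernel dimension of $\Psi_R$ is upper semicontinuous and hence the rank $\dim M_R$ is lower semicontinuous: $\dim_{LG} M_R(G) \leq \liminf_n \dim_{LG_n} M_R(G_n) \leq \liminf_n \dim_{LG_n} M(G_n)$. Letting $\varepsilon \to 0$ yields the claim, and with it the theorem.

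The main obstacle is that a Chabouty limit of finitely presented groups need not be finitely presented, so $M(G)$ is generated by infinitely many relations and no single finite matrix captures it. This is exactly what the reduction to finite subsets $R$ together with the stabilisation of relations is designed to overcome: each finite approximant is governed by a fixed matrix over $\C F$ to which L\"uck-type spectral convergence applies, and the continuity of $\dim$ along the directed family of finite $R$ transfers the pointwise control to the full module. A secondary technical point is the bookkeeping of the $\beta_0^{(2)}$-terms should finite groups occur in the sequence, but these tend to zero whenever the orders of the $G_n$ diverge, which is the only relevant case when the limit $G$ is infinite.
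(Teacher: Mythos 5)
The paper does not prove this statement: it is quoted from Pichot's article \cite{pichot} and used as a black box, so there is no internal proof to compare against. Judged on its own, your argument is essentially correct and is in fact the route of Pichot's original proof, namely the ``easy'' direction of L\"uck-type approximation. The three pillars all hold: Chabauty convergence of $(G_n,S)$ to $(G,S)$ is exactly eventual stabilisation of membership in the kernels $N_n\subset F_k$, hence convergence of the canonical traces on $\C F_k$; the Fox--Jacobian description gives $\beta_1^{(2)}(G)=k-1+\beta_0^{(2)}(G)-\dim_{LG}M(G)$ once one takes $R=N$ (or any normally generating set --- the resulting closed submodule is the same, since the boundary factors through the relation module); and for a \emph{fixed} finite matrix over $\C F_k$, convergence of moments together with the uniform norm bound $\|\cdot\|\le\|\cdot\|_1$ gives weak convergence of spectral measures, so the portmanteau inequality for the closed set $\{0\}$ yields upper semicontinuity of the kernel, i.e.\ lower semicontinuity of the rank --- note that only this direction is needed, so no integrality or determinant bound is required, which is why the argument works for arbitrary marked limits and not just chains of finite-index subgroups. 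The reduction to finite $R$ via normality of the trace and the stabilisation $R\subset N_n$ is the correct way to handle a non-finitely-presented limit. Two bookkeeping points deserve one explicit line each in a written-up version: (i) the exhaustion $\dim_{LG}M(G)=\sup_R\dim_{LG}M_R(G)$ uses that the projections onto the $M_R(G)$ increase strongly to the projection onto $M(G)$ and that $\tau$ is normal; (ii) for the $\beta_0^{(2)}$-corrections it is cleanest to pass to a subsequence realising the $\limsup$ and note that finite groups have $\beta_1^{(2)}=0$, so either the $\limsup$ is $0$ and the claim is trivial, or one may assume all $G_n$ infinite, in which case the corrections on the right vanish and $\beta_0^{(2)}(G)\ge 0$ only helps.
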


This applies im particular to limits of free groups and shows that they all have a positive first $\ell^2$-Betti numbers.
In particular, there is an abundance of finitely presented groups with positive first $\ell^2$-Betti number. 

\section{Normal generation by torsion elements}
The first main result in this note extends the trivial upper bound from Equation \eqref{triv} (under some additional hypothesis) to the case where the group is \emph{normally} generated by a certain finite set of elements. The additional hypothesis is that the group $G$ be finitely presented and residually $p$-finite for some prime $p$. More precisely:

\begin{theorem} \label{main1}
Let $G$ be an infinite, finitely presented, residually $p$-finite group for some prime $p$. If $G$ is normally generated by a subgroup $\Lambda$, then
$$\beta_1^{(2)}(G) \leq \beta_1^{(2)}(\Lambda).$$
In particular, if $G$ is normally generated by elements $g_1,\dots,g_k \in G$ of order $n_1,\dots,n_k \in \{1,2,\dots \} \cup \{\infty\}$, 
then 
\begin{equation} \label{maineq}
\beta_{1}^{(2)}(G)  \leq k - 1 - \sum_{i=1}^k \frac1{n_i}.
\end{equation}
%Moreover, if we have equality, then the natural map $$\iota \colon \underbrace{\Z/n_1\Z \ast \cdots \ast \Z/n_k\Z}_{k} \to G$$ is injective. 
\end{theorem}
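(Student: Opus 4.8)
The plan is to prove the main inequality $\beta_1^{(2)}(G) \leq \beta_1^{(2)}(\Lambda)$ and then specialize. Since $G$ is residually $p$-finite and finitely presented, L\"uck's Approximation Theorem (Theorem \ref{lueck}) lets me compute $\beta_1^{(2)}(G)$ as the normalized limit $\lim_n \dim_\C H^1(G,\C[G/H_n]) / [G:H_n]$ along a chain of normal $p$-power-index subgroups $H_n$ with trivial intersection. So the strategy is to bound $\dim_\C H^1(G,\C[G/H_n])$ from above in terms of the data coming from $\Lambda$, uniformly enough that the bound survives the limit. I would work throughout with cocycles: a class in $H^1(G, \C[G/H_n])$ is represented by a crossed homomorphism $c \colon G \to \C[G/H_n]$ modulo coboundaries, and the whole argument is the promised elementary calculation with such cocycles.

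First I would set $H = H_n$ and analyze the restriction map $H^1(G,\C[G/H]) \to H^1(\Lambda, \C[G/H])$. The key input is that $\Lambda$ normally generates $G$: the normal closure of $\Lambda$ is all of $G$, so a crossed homomorphism $c$ is determined by its values on the conjugates $g\Lambda g^{-1}$, and the cocycle identity $c(gh) = c(g) + g\cdot c(h)$ propagates the values on $\Lambda$ to all of $G$. Concretely, I expect to show that if $c$ vanishes on $\Lambda$ (up to a coboundary), then $c$ is itself a coboundary, so that the restriction map is injective on cohomology after accounting for the inner part. The point is that normal generation forces the cohomology of $G$ to be controlled by the cohomology of $\Lambda$ once we pass to the finite quotient $G/H$ where dimensions are finite and counting arguments apply. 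Dividing by $[G:H]$ and taking the limit then yields $\beta_1^{(2)}(G) \leq \beta_1^{(2)}(\Lambda)$ via Theorem \ref{lueck} applied to both groups (or to $G$ with the cocycle estimate referring back to $\Lambda$).

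For the specialization, I would take $\Lambda$ to be the subgroup generated by $g_1,\dots,g_k$. The first $\ell^2$-Betti number of a $k$-generated group is bounded by $k-1$ (the trivial estimate \eqref{triv}), and the refinement noted in the text is that a generator of order $n_i$ contributes only $1 - 1/n_i$ rather than $1$; this is exactly the content of the second displayed inequality. So I need $\beta_1^{(2)}(\Lambda) \leq k - 1 - \sum_i 1/n_i$, which follows because $\Lambda$ is a quotient of the free product $\mathbb{Z}/n_1 * \cdots * \mathbb{Z}/n_k$ (with $\mathbb{Z}$ in place of $\mathbb{Z}/\infty$), whose first $\ell^2$-Betti number is computed directly by the Euler-characteristic-type formula $k - 1 - \sum_i 1/n_i$, and passing to a quotient only decreases $\beta_1^{(2)}$ among $k$-generated groups via the same Morse/cocycle inequality. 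Chaining the two bounds gives \eqref{maineq}.

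The hard part will be establishing the uniform cocycle bound that links $H^1(G,\C[G/H])$ to the $\Lambda$-data \emph{without} losing a factor that blows up under normalization by $[G:H]$. Naively, each conjugate $g\Lambda g^{-1}$ contributes its own cocycle data, and there are $[G:H]$-many cosets to worry about; the role of Gruenberg's characterization of $p$-groups (Theorem \ref{gr1}) and the residual $p$-finiteness hypothesis is precisely to control the augmentation-ideal filtration on $\C[G/H]$ so that the cocycle restricted to the relevant conjugates can be captured with the right multiplicity. I expect the delicate bookkeeping to lie in showing that the dimension count over $G/H$ collapses to $[G:H]$ times the per-generator contribution $1 - 1/n_i$, so that the division by $[G:H]$ and the limit produce the clean bound rather than an inflated one.
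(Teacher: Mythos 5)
You have the right architecture (L\"uck approximation along a chain of $p$-power-index normal subgroups, restriction of cocycles to $\Lambda$, Gruenberg's theorem, and the annihilator $\sum_{j=0}^{n_i-1}\bar g_i^j$ giving the $1-\frac{1}{n_i}$ per torsion generator), but the central technical step is not actually carried out, and the way you describe it suggests a misconception about where the difficulty lies. The cocycle identity does \emph{not} straightforwardly ``propagate the values on $\Lambda$ to all of $G$'': for $c$ vanishing on $\Lambda$ one only gets $c(hg_ih^{-1})=(1-hg_ih^{-1})c(h)$, which involves the unknown value $c(h)$. The paper's resolution is an induction on the augmentation filtration: if $c|_\Lambda=0$ and $c(G)\subseteq\omega^m$, then writing any $g$ as a product of conjugates $h_ig_{q(i)}^{\pm1}h_i^{-1}$ and expanding with the cocycle rule shows every term carries a factor $(1-h_ig_{q(i)}^{\pm1}h_i^{-1})\in\omega$, so $c(G)\subseteq\omega^{m+1}$; hence $c(G)\subseteq\bigcap_m\omega^m=\{0\}$ by Gruenberg's theorem for the $p$-group $G/H$. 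This makes the restriction $Z^1(G,\Z[G/H])\to Z^1(\Lambda,\Z[G/H])$ \emph{exactly} injective --- there is no multiplicity bookkeeping over the $[G:H]$ cosets and no factor to lose under normalization, contrary to what your last paragraph anticipates. Without this induction the proof does not exist; it is the whole content of the theorem.

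Your route to the second inequality also has a genuine flaw: you invoke the principle that passing to a quotient only decreases $\beta_1^{(2)}$, which is false in general ($F_2\times F_2$ surjects onto $F_2$, with first $\ell^2$-Betti numbers $0$ and $1$). The bound $\beta_1^{(2)}(\Lambda)\leq k-1-\sum_i\frac{1}{n_i}$ that you want is true for \emph{infinite} $\Lambda$, but by the direct evaluation-at-generators argument, not by quotient monotonicity; and for finite $\Lambda$ it can fail outright (take $\Lambda=\Z/2$, where the right-hand side is $-\frac12$), the correct statement carrying an extra $+\frac{1}{|\Lambda|}$. This is why the paper does not chain the two inequalities: it runs the annihilator estimate inside $\Z[G/H]$, obtaining $\dim_\C H^1(G,\Z[G/H])\otimes_\Z\C\leq [G:H]\sum_i\bigl(1-\frac{1}{n_i}\bigr)-[G:H]+1$, so that the error term becomes $\frac{1}{[G:H]}\to 0$ in the limit. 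Chaining through $\beta_1^{(2)}(\Lambda)$ as you propose would give a strictly weaker conclusion precisely when $\Lambda$ is finite and $k-1-\sum_i\frac{1}{n_i}<0$, which is a case the theorem is designed to cover (it rules out, e.g., infinite finitely presented residually $p$-finite groups normally generated by a single torsion element).
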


\begin{proof}
Let $H$ be a finite index normal subgroup of $G$, such that $G/H$ is of $p$-power order. We consider $Z^1(G,\Z[G/H])$, the abelian group of 1-cocycles of the group $G$ with values in the $G$-module $\Z[G/H]$. In a first step, we will show that the restriction map
$$\sigma \colon Z^1(G,\Z[G/H]) \to Z^1(\Lambda,\Z[G/H])$$
is injective.

Note that there is a natural injective evaluation map
$$\pi\colon Z^1(\Lambda,\Z[G/H]) \to \Z[G/H]^{\oplus k}$$
which sends a 1-cocycle $c$ to the values on the $g_i$, i.e. $c \mapsto (c(g_i))_{i=1}^k.$ 

We claim that $\pi \circ \sigma$ is injective. Indeed, assume that $c \in \ker(\pi \circ \sigma)$ and assume that $c(g) \in \omega^m$ for all $g \in G$, where $m$ is some integer greater than or equal zero. Since $g$ is in the normal closure of $g_1,\dots,g_n$, there exists some natural number $l \in \N$ and $h_1,\dots h_l \in G$, such that
$$g = \prod_{i=1}^l  h_i g^{\pm 1}_{q(i)} h_i^{-1},$$
for some function $q\colon \{1,\dots,l\} \to \{1,\dots,k\}$.
Computing $c(g)$ using the cocycle relation and $c(g^{\pm}_i)=0$, for $1 \leq i \leq k$, we get
$$c(g) = \sum_{i=1}^l \left(\prod_{j=1}^{i-1} h_j g^{\pm}_{q(j)} h_j^{-1} \right) (1-h_i g^{\pm}_{q(i)} h_i^{-1})c(h_i).$$
By hypothesis $c(h_i) \in \omega^m$ for $1 \leq i \leq l$ and we conclude that $c(g) \in \omega^{m+1}$. This argument applies to all $g \in G$.
Since the hypothesis $c(g) \in \omega^m$ is obviously satisfied for $m =0$, we finally get by induction that $c(g) \in \omega^m$ for all $ m\in \N$ and hence $$c(g) \in \bigcap_{m=1}^{\infty} \omega^m,\quad \forall g \in G.$$ By Theorem \ref{gr1} and since $G/H$ is of prime power order, we know that $\cap_{m=1}^{\infty} \omega^m = \{0\}$. Hence, $c(g)=0$ for all $g \in G$. This proves that the map $\pi\circ \sigma$ and hence $\sigma$ is injective.

By assumption, there exists a chain
$$\cdots \subset H_{n+1} \subset H_n \subset \cdots \subset G$$
of finite index subgroups with $p$-power index such that $$\bigcap_{n=1}^{\infty} H_n= \{e\}.$$

The claim is now implied by L\"uck's Approximation Theorem (see Theorem \ref{lueck}). Indeed, L\"uck's Approximation Theorem applied to the chain of finite index subgroups of $p$-power index gives:
\begin{equation} \label{eq2}
\beta_1^{(2)}(G) = \lim_{n \to \infty} \frac{\dim_\C \, H^1(G,\C[G/H_n])}{[G : H_n]} \leq \lim_{n \to \infty} \frac{\dim_\C \, H^1(\Lambda,\C[G/H_n])}{[G : H_n]} \leq \beta_1^{(2)}(\Lambda).
\end{equation}
Here, we used Kazhdan's inequality in the last step, see \cite{osl} for a proof.
This finishes the proof of the first inequality. The second claim follows from a simple and well-known estimate for the first $\ell^2$-Betti number. Indeed, for each $H$, we can estimate the dimension of the image of $\pi \otimes_{\Z} \C$.
Since $g_i$ has order $n_i$, we compute $$0=c(g_i^{n_i}) = \left(\sum_{j=0}^{n_i-1} \bar g_i^j\right) c(g_i),$$
where we denote by $\bar g_i$ the image of $g_i$ in $G/H$.
If the order of the image of $\bar g_i$ is $m_i$, then $n_i^{-1}\sum_{j=0}^{n_i-1} \bar g_i^j$ is a projection of normalized trace $m_i^{-1}$ such that $$\dim_{\C} \, \left(\im(\pi)\otimes_{\Z} \C\right) \leq [G:H] \cdot \sum_{i=1}^k \left(1 - \frac {1}{m_i} \right) \leq [G:H] \cdot \sum_{i=1}^k \left(1 - \frac {1}{n_i} \right).$$

This implies that
$$\dim_\C \, H^1(G,\Z[G/H]) \otimes_{\Z} \C \leq [G:H] \cdot \sum_{i=1}^k \left(1 - \frac {1}{n_i} \right) -[G :H] +1 .$$ This finishes the proof, again using L\"uck's Approximation Theorem.
\end{proof}

\begin{remark} \label{lack}
Let $G$ be an infinite, residually $p$-finite group.
It follows from Proposition 3.7 in \cite{MR2271299} in combination with L\"uck's Approximation Theorem that $$\beta_1^{(2)}(G) \leq \dim_{\Z/p\Z} H^1(G,\Z/p\Z) -1.$$
This implies that $\beta_1^{(2)}(G) \leq k-1$ in the situation that $G$ is normally generated by $g_1,\dots,g_k$. Our result improves this estimate in the case when some of the elements $g_1,\dots,g_k$ have finite order.
\end{remark}

\begin{remark} \label{rem1}
Consider $G=PSL(2,\Z) = \langle a,b \mid a^2=b^3=e\rangle$. Then, $\beta_1^{(2)}(G)= \frac1{6} \neq 0$ and $G$ is normally generated by the element $ab \in G$. Hence, the assumption that $G$ be residually a $p$-group cannot be omitted in Theorem \ref{main1}.
\end{remark}

\section{An uncertainty principle and applications}
\label{uncertsec}
In this section we want to prove a quantitative estimate on the girth of a marked group in terms of its first $\ell^2$-Betti number. In \cite{thomosin}, we constructed for given $\varepsilon>0$ a $k$-generated simple groups with first $\ell^2$-Betti number greater than $k-1-\varepsilon$. The construction involved methods from small cancellation theory and in particular, those groups did not admit any short relations in terms of the natural generating set. This in fact follows already from the main result in \cite{pichot}. If $(G_i,S_i)_{i \in \N}$ is a sequence of marked groups with $|S_i|= k$ and $\lim_{i \to \infty} \beta_1^{(2)}(G_i) = k-1$, then necessarily $$\lim_{i \to \infty}{\rm girth}(G_i,S_i) = \infty,$$
where ${\rm girth}(G,S)$ denotes the length of the shortest cycle in the Cayley graph of $G$ with respect to the generating set $S$.
 Indeed, by \cite{pichot}, any limit point $(G,S)$ of the sequence $(G_i,S_i)_{i \in \N}$ satisfies $\beta_1^{(2)}(G)=k-1$ and hence is a free group on the basis $S$. (This last fact is well-known and is also a consequence of our next theorem.)
In this section, we want to prove a quantitative version of this result.

\begin{theorem} \label{ineq}
Let $G$ be a finitely generated group with generating set $S=\{g_1,\dots,g_k\}$. Then,
$${\rm girth}(G,S) \geq \frac1{k-1-\beta_1^{(2)}(G)}.$$
\end{theorem}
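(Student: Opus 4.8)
The plan is to prove the equivalent inequality $\beta_1^{(2)}(G) \le k - 1 - 1/g$, where $g := {\rm girth}(G,S)$; the assertion then follows by taking reciprocals (and is vacuous when $g = \infty$, i.e.\ when $G$ is free on $S$). So I may assume $G$ is infinite and not free, whence $g < \infty$. A shortest cycle in the Cayley graph is simple, and reading its edge-labels produces a cyclically reduced word $r = g_{i_1}^{\varepsilon_1}\cdots g_{i_g}^{\varepsilon_g}$ of length $g$ representing $e$ in $G$; the prefixes $v_0 = e$ and $v_t = g_{i_1}^{\varepsilon_1}\cdots g_{i_t}^{\varepsilon_t}$ satisfy $v_g = e$, and $v_0,\dots,v_{g-1} \in G$ are pairwise distinct precisely because the cycle is simple. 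I will use the identification of $\beta_1^{(2)}$ with reduced $\ell^2$-cohomology: evaluating a derivation $c\colon G \to \ell^2 G$ on the generators embeds $Z^1(G,\ell^2 G)$ into $(\ell^2 G)^k$ via $c \mapsto (c(g_i))_i$, the inner derivations satisfy $\dim_G \overline{B^1} = 1$ (as $G$ is infinite, since there are no nonzero $G$-fixed vectors in $\ell^2 G$), and hence $\beta_1^{(2)}(G) = \dim_G Z^1(G,\ell^2 G) - 1$.

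Next I would impose only the single relation $r$. Exactly as in the cocycle computation of Theorem \ref{main1}, a derivation must vanish on $r$, and the cocycle identity rewrites this as $\sum_{i=1}^k \frac{\partial r}{\partial g_i}\, c(g_i) = 0$, where $\frac{\partial r}{\partial g_i} \in \Z G$ are the images of the Fox derivatives acting by left multiplication. Thus $Z^1(G,\ell^2 G) \subseteq W := \ker R$, where $R\colon (\ell^2 G)^k \to \ell^2 G$ is the $\rho(G)$-equivariant operator $R(\xi_1,\dots,\xi_k) = \sum_i \frac{\partial r}{\partial g_i}\,\xi_i$. Since $\dim_G \ker R + \dim_G \overline{\im R} = \dim_G (\ell^2 G)^k = k$, it suffices to prove $\dim_G \overline{\im R} \ge 1/g$. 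Writing $b := R R^* = \sum_{i=1}^k \frac{\partial r}{\partial g_i}\bigl(\frac{\partial r}{\partial g_i}\bigr)^* \in LG$, we have $\overline{\im R} = \overline{\im b}$, so $\dim_G \overline{\im R} = \tau(s(b))$, where $s(b)$ denotes the support projection of the positive element $b$.

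The heart of the argument is two estimates for $b$. For the trace, the key point is that each $\frac{\partial r}{\partial g_i}$, pushed into $\Z G$, is a signed sum of prefixes $v_t$: since the cycle is simple the occurring prefixes are pairwise distinct, and reducedness of $r$ prevents two terms inside a fixed $\frac{\partial r}{\partial g_i}$ from coinciding. Hence $\frac{\partial r}{\partial g_i}$ is a $\pm 1$-combination of $n_i$ distinct group elements, where $n_i$ is the number of occurrences of $g_i^{\pm 1}$ in $r$, so $\|\frac{\partial r}{\partial g_i}\|_2^2 = n_i$ and therefore $\tau(b) = \sum_i \|\frac{\partial r}{\partial g_i}\|_2^2 = \sum_i n_i = g$. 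For the operator norm, the estimate (operator norm $\le$ $\ell^1$-norm of coefficients) gives $\|\frac{\partial r}{\partial g_i}\| \le n_i$, whence $\|b\| \le \sum_i n_i^2 \le (\sum_i n_i)^2 = g^2$.

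Finally, from $b \le \|b\|\, s(b)$ I obtain $g = \tau(b) \le \|b\|\,\tau(s(b)) \le g^2\, \tau(s(b))$, so $\dim_G \overline{\im R} = \tau(s(b)) \ge 1/g$. Combining the steps, $\beta_1^{(2)}(G) = \dim_G Z^1 - 1 \le \dim_G W - 1 = k - \dim_G \overline{\im R} - 1 \le k - 1 - 1/g$, which rearranges to the claim. I expect the main obstacle to be the combinatorial no-cancellation claim, namely that passing from $\Z F_k$ to $\Z G$ does not collapse any Fox-derivative terms: this is exactly where the hypothesis that $r$ traces a \emph{shortest}, hence \emph{simple}, cycle is indispensable, since it is what upgrades the crude bound $\tau(b) \le g$ to the equality $\tau(b) = g$. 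The remaining ingredient, the inequality $\tau(s(b)) \ge \tau(b)/\|b\|$, is a routine consequence of $b \le \|b\|\, s(b)$.
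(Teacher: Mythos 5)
Your argument is correct and follows the same basic strategy as the paper's proof: evaluate cocycles on the generators, use Fox calculus to convert the shortest relation into a $\rho(G)$-equivariant linear constraint $R$ on $Z^1$, and bound $\dim_{LG}\overline{\im\,R}$ from below via the uncertainty-principle mechanism $\tau(s(b))\geq \tau(b)/\|b\|$ combined with the bound (operator norm) $\leq$ ($\ell^1$-norm). The only real difference is in the packaging of the last step: the paper applies Theorem \ref{uncert} coordinatewise to the individual Fox derivatives $\partial_i(w)$ and sums the resulting defects, asserting $\im(\pi)\subseteq\bigoplus_i\bigl(LG\ominus\im(\partial_i(w))\bigr)$, whereas you apply the trace--norm inequality once to $b=RR^*=\sum_i a_ia_i^*$. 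Your version is arguably the cleaner one: the paper's componentwise containment is not literally correct as stated (the kernel of $(\xi_i)\mapsto\sum_i a_i\xi_i$ need not decompose coordinatewise), though only the weaker consequence $\dim\overline{\im\,R}\geq 1/\ell(w)$ is needed and that survives. You also make explicit a point the paper glosses over, namely that for a \emph{shortest} relation the prefixes $v_0,\dots,v_{g-1}$ are pairwise distinct in $G$ and reducedness prevents collisions within a fixed $\partial_i(r)$, so no cancellation occurs when the Fox derivatives are pushed from $\Z F_k$ to $\Z G$; this is exactly what guarantees $\tau(b)=g$ in your version (respectively, that $|{\rm supp}(\partial_i(w))|$ equals the number of occurrences of $g_i^{\pm1}$ and is nonzero in the paper's version). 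Both routes yield the same bound $\beta_1^{(2)}(G)\leq k-1-1/g$.
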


In order to prove this theorem, we need some variant of the so-called uncertainty principle. We denote by $\|.\|$ the usual operator norm on $B(\ell^2G)$ and use the same symbol to denote the induced norm on $\C G$, i.e., $\|f\| = \|\lambda(f)\|$ for all $f \in \C G$. The 1-norm is denoted by $\|\sum_g a_g g\|_1 = \sum_g |a_g|$. For $f = \sum_g a_g g$ we define its support as ${\rm supp} := \{g \in G \mid a_g \neq 0\}$.

\begin{theorem} \label{uncert}
Let $G$ be a group and $f \in \mathbb \C G$ be a non-zero element of the complex group ring. Then,
$$\dim_{LG} (f \cdot LG) \cdot |{\rm supp}(f)| \geq 1.$$
\end{theorem}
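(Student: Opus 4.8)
The plan is to reduce the statement to two elementary norm inequalities relating the von Neumann dimension, the operator norm, and the $\ell^2$-norm of $f$.

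First I would identify $\dim_{LG}(f \cdot LG)$ with the trace of a projection. Let $p \in LG$ denote the left support projection of $\lambda(f)$, that is, the orthogonal projection onto the closure of the image of $\lambda(f)$ acting on $\ell^2 G$. Since $\lambda(f) \in LG$ commutes with $\rho(G)$, this image is $\rho(G)$-invariant, so indeed $p \in LG$. Embedding $LG \hookrightarrow \ell^2 G$ via $a \mapsto a\delta_e$, the closure of the right ideal $f \cdot LG$ is exactly $p \,\ell^2 G$, and the compatibility of L\"uck's dimension function with this $\ell^2$-picture gives $\dim_{LG}(f\cdot LG) = \dim_G(p\,\ell^2 G) = \tau(p)$. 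It then suffices to prove $\tau(p) \cdot |{\rm supp}(f)| \geq 1$.

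Second, I would bound $\tau(p)$ from below by the ratio of the $\ell^2$-norm to the operator norm. Writing $q$ for the right support projection of $\lambda(f)$, i.e. the range projection of $f^*f$, one has $f^* f \leq \|f\|^2 q$, and hence $\tau(f^*f) \leq \|f\|^2 \tau(q)$. Traciality of $\tau$ gives $\tau(q) = \tau(p)$, and since $\tau(f^* f) = \sum_g |a_g|^2$ this reads
$$\sum_g |a_g|^2 \leq \|f\|^2 \, \tau(p).$$
Finally, I would control the operator norm by the size of the support. With $n = |{\rm supp}(f)|$, the triangle inequality gives $\|f\| \leq \|f\|_1 = \sum_g |a_g|$, while Cauchy--Schwarz gives $\sum_g |a_g| \leq \sqrt n\, \big(\sum_g |a_g|^2\big)^{1/2}$, so that $\|f\|^2 \leq n \sum_g |a_g|^2$. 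Combining this with the previous display yields
$$\sum_g |a_g|^2 \leq \|f\|^2\, \tau(p) \leq n \Big(\sum_g |a_g|^2\Big)\tau(p),$$
and since $f \neq 0$ one may divide by $\sum_g|a_g|^2 > 0$ to conclude $n \, \tau(p) \geq 1$, which is the assertion.

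The two norm inequalities are routine. The step requiring genuine care is the first one, namely the rigorous identification $\dim_{LG}(f\cdot LG) = \tau(p)$: here one must appeal to the precise properties of L\"uck's dimension function, in particular that it is insensitive to replacing a submodule of a finitely generated module by its closure, rather than arguing naively with the $\ell^2$-completion. I expect this to be the main obstacle to a fully rigorous write-up.
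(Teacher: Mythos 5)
Your proposal is correct and takes essentially the same route as the paper: the identification $\dim_{LG}(f\cdot LG)=\tau(p)$ with the range projection, the trace bound $\tau(f^*f)\le \|f\|^2\,\tau(p)$, and the Cauchy--Schwarz comparison of $\|f\|_1$ with $|{\rm supp}(f)|\cdot\tau(f^*f)$ together with $\|f\|\le\|f\|_1$ are exactly the paper's two displayed inequalities, merely chained in a slightly different order (your use of the right support projection $q$ and traciality replaces the paper's direct computation $\tau(f^*f)=\tau(p_Kff^*)\le\tau(p_K)\|ff^*\|$).
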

\begin{proof}
First of all we have $\dim_{LG}(f \cdot LG) = \tau(p_K)$, where $K$ is the closure of the image of $\lambda(f) \colon \ell^2 G \to \ell^2 G$.
\begin{equation} \label{eq1}
\tau(f^*f) \leq\dim_{LG}(f \cdot LG) \cdot \|f\|^2 
\end{equation}
since
$$\tau(f^*f) =\tau(ff^*) = \tau(p_Kff^*) \leq \tau(p_K) \cdot \|ff^*\| = \dim_{LG} (f \cdot LG) \cdot \|f\|^2.$$

Secondly, we see that
\begin{equation} \label{eq2}
\|f\|^2_1 \leq |{\rm supp} (f) | \cdot \tau(f^*f)
\end{equation}
by the Cauchy-Schwarz inequality applied to $f\cdot \chi_{{\rm supp} (f)}$, where the product is here the pointwise product of coefficients and $\|f\|_1$ denotes the usual 1-norm on $\mathbb C[G]$. Combining Equations \eqref{eq1} and \eqref{eq2} we conclude
\begin{equation}
\dim_{LG} (f \cdot LG) \cdot |{\rm supp}(f)| \geq \left(\frac{\|f\|_1}{\|f\|} \right)^2.
\end{equation}

Now, since each group element acts as a unitary, and hence with operator norm $1$ on $\ell^2G$, we get $\|f\|_1 \geq \|f\|$. This proves the claim and finishes the proof.
\end{proof}

The preceding result and the following corollary were proved as result of a question by Efremenko on MathOverflow.

\begin{corollary}
Let $G$ be a finite group and $f \in \C[G]$ be an arbitrary non-zero element. Then,
$$\dim_{\C} (f \cdot \C[G]) \cdot |{\rm supp}(f)| \geq |G|.$$
\end{corollary}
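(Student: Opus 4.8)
The plan is to deduce this directly from Theorem \ref{uncert} applied to the finite group $G$; the only point requiring attention is the relation between the von Neumann dimension $\dim_{LG}$ appearing there and the ordinary complex dimension $\dim_\C$. First I would record that for a finite group the Hilbert space $\ell^2 G$ is just the finite-dimensional space $\C[G]$, that the left-regular representation $\lambda$ is left multiplication, and that $LG = \lambda(\C[G])$ is all of the group algebra acting on itself. In particular the right ideal $f \cdot LG$ is identified with $f \cdot \C[G] = \im \lambda(f)$, whose closure $K$ (appearing in the proof of Theorem \ref{uncert}) is already finite-dimensional.

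The key computation is to show that for every $\rho(G)$-invariant subspace $K \subset \ell^2 G$ one has $\dim_{LG} K = \dim_\C K / |G|$. For this I would compare the normalized trace $\tau$ with the ordinary operator trace $\mathrm{Tr}$ on $B(\ell^2 G) \cong M_{|G|}(\C)$. A one-line computation gives $\mathrm{Tr}(\lambda(g)) = |\{h \in G \mid gh = h\}| = |G|\,\tau(\lambda(g))$, so by linearity $\tau = |G|^{-1}\,\mathrm{Tr}$ on all of $LG$. Since $p_K \in LG$ and $\mathrm{Tr}(p_K) = \dim_\C K$, this yields $\dim_{LG} K = \tau(p_K) = \dim_\C K / |G|$, and hence $\dim_{LG}(f \cdot LG) = \dim_\C(f \cdot \C[G]) / |G|$.

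With this normalization in hand the corollary is immediate: substituting into the inequality of Theorem \ref{uncert} gives
$$\frac{\dim_\C(f \cdot \C[G])}{|G|} \cdot |{\rm supp}(f)| = \dim_{LG}(f \cdot LG) \cdot |{\rm supp}(f)| \geq 1,$$
and multiplying through by $|G|$ yields the claim. I do not expect any genuine obstacle here; the whole content lies in Theorem \ref{uncert}, and the present statement is merely its finite-dimensional shadow, the classical uncertainty inequality for finite abelian (and here arbitrary finite) groups. The only thing one must be careful about is the bookkeeping of the normalization constant $|G|$, i.e.\ not confusing the normalized trace $\tau$, for which $\tau(1) = 1$, with the unnormalized dimension count $\mathrm{Tr}$.
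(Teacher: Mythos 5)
Your proposal is correct and is exactly the deduction the paper intends (the paper leaves the corollary as an immediate consequence of Theorem \ref{uncert} without writing out the normalization). Your identification $\dim_{LG}K=\dim_{\C}K/|G|$ via $\tau=|G|^{-1}\,\mathrm{Tr}$ on $LG=\lambda(\C[G])$, together with the observation that the closure of $\im\,\lambda(f)$ is $f\cdot\C[G]$, is precisely the missing bookkeeping.
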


We are now ready to prove Theorem \ref{ineq}.
\begin{proof}[Proof of Theorem \ref{ineq}:]
Again, we study the map $\pi \colon Z^1(G,LG) \to LG^{\oplus k}$, which is given by $c \mapsto (c(g_i))_{i=1}^k$. If $w \in {\mathbb F}_k$ is some word such that $w(g_1,\dots,g_k)=e$ in $G$, then
$$0 = c(w(g_1,\dots,g_k)) = \sum_{i=1}^k \partial_i(w)(g_1,\dots,g_k) \cdot c(g_i),$$
$\partial_i \colon \Z[\mathbb F_k] \to \Z[\mathbb F_k]$ denotes the $i$-th Fox derivative for $1 \leq i \leq k$. Thus, the image of $\pi$ lies is annihilated by the $LG$-linear map $(\xi_1,\dots,\xi_k) \mapsto \sum_{i=1}^k \partial_i(w)(g_1,\dots,g_k) \xi_i$. 
In particular, the image of $\pi$ does not intersect with the kernel of $\partial_i(w)$ in the $i$-th coordinate. The number of summands in $\partial_i(w)$ is equal to the number of occurances of the letters $g_i^{\pm}$ in $w$. Thus, we have
$$\sum_{i=1}^k |{\rm supp}(\delta_i(w))| = \ell(w)$$
and $$\im(\pi) \subseteq \bigoplus_{i=1}^k  (LG \ominus \im(\partial_i(w))).$$
Thus $$\dim_{LG} \im(\pi) \leq k - \sum_{i=1}^k |{\rm supp}(\delta_i(w))|^{-1} \leq k - \frac1{\ell(w)}.$$
This implies that
$\beta^{(2)}_1(G) \leq k-1 - 1/\ell(w)$ and finishes the proof. \end{proof}

\section*{Acknowledgments}
I want to thank Klim Efremenko, Mikhael Ershov, Ana Khukhro, and Denis Osin for interesting comments. This research was supported by ERC-Starting Grant No.\!\! 277728 {\it Geometry and Analysis of Group Rings}.

\begin{bibdiv}
\begin{biblist}

\bib{MR3100378}{article}{
   author={Aschenbrenner, Matthias},
   author={Friedl, Stefan},
   title={3-manifold groups are virtually residually $p$},
   journal={Mem. Amer. Math. Soc.},
   volume={225},
   date={2013},
   number={1058},
   pages={viii+100},
}

\bib{atiyah}{article}{
   author={Atiyah, Michael F.},
   title={Elliptic operators, discrete groups and von Neumann algebras},
   conference={
      title={Colloque ``Analyse et Topologie'' en l'Honneur de Henri Cartan
      (Orsay, 1974)},
   },
   book={
      publisher={Soc. Math. France},
      place={Paris},
   },
   date={1976},
   pages={43--72. Ast\'erisque, No. 32-33},
%   review={\MR{0420729 (54 \#8741)}},
}

\bib{MR0212078}{article}{
   author={Baumslag, Gilbert},
   title={Residually finite one-relator groups},
   journal={Bull. Amer. Math. Soc.},
   volume={73},
   date={1967},
   pages={618--620},
}

\bib{cheegro}{article}{
   author={Cheeger, Jeff},
   author={Gromov, Mikhael},
   title={$L_2$-cohomology and group cohomology},
   journal={Topology},
   volume={25},
   date={1986},
   number={2},
   pages={189--215},
   issn={0040-9383},
%   review={\MR{837621 (87i:58161)}},
%   doi={10.1016/0040-9383(86)90039-X},
}

\bib{MR0272895}{article}{
   author={Formanek, Edward},
   title={A short proof of a theorem of Jennings},
   journal={Proc. Amer. Math. Soc.},
   volume={26},
   date={1970},
   pages={405--407},
   issn={0002-9939},
}

\bib{MR0087652}{article}{
   author={Gruenberg, Karl},
   title={Residual properties of infinite soluble groups},
   journal={Proc. London Math. Soc. (3)},
   volume={7},
   date={1957},
   pages={29--62},
}

\bib{MR0159881}{article}{
   author={Gruenberg, Karl},
   title={The residual nilpotence of certain presentations of finite groups},
   journal={Arch. Math.},
   volume={13},
   date={1962},
   pages={408--417},
}

\bib{MR0068540}{article}{
   author={Jennings, Stephen A.},
   title={The group ring of a class of infinite nilpotent groups},
   journal={Canad. J. Math.},
   volume={7},
   date={1955},
   pages={169--187},
   issn={0008-414X},
}

\bib{MR2271299}{article}{
   author={Lackenby, Marc},
   title={Covering spaces of 3-orbifolds},
   journal={Duke Math. J.},
   volume={136},
   date={2007},
   number={1},
   pages={181--203},
}

\bib{lueckl2}{article}{
   author={L{\"u}ck, Wolfgang},
   title={Dimension theory of arbitrary modules over finite von Neumann
   algebras and $L^2$-Betti numbers. II. Applications to Grothendieck
   groups, $L^2$-Euler characteristics and Burnside groups},
   journal={J. Reine Angew. Math.},
   volume={496},
   date={1998},
   pages={213--236},
   issn={0075-4102},
%   review={\MR{1605818 (99k:58177)}},
 %  doi={10.1515/crll.1998.031},
}

\bib{MR1926649}{book}{
   author={L{\"u}ck, Wolfgang},
   title={$L\sp 2$-invariants: theory and applications to geometry and
   $K$-theory},
   series={Ergebnisse der Mathematik und ihrer Grenzgebiete. 3. Folge. A
   Series of Modern Surveys in Mathematics},
   volume={44},
   publisher={Springer-Verlag},
   place={Berlin},
   date={2002},
   pages={xvi+595},
}

\bib{osl}{article}{
   author={L{\"u}ck, Wolfgang},
   author={Osin, Denis},
   title={Approximating the first $L^2$-Betti number of residually finite
   groups},
   journal={J. Topol. Anal.},
   volume={3},
   date={2011},
   number={2},
   pages={153--160},
 }

\bib{thomosin}{article}{
   author={Osin, Denis},
   author={Thom, Andreas},
   title={Normal generation and $\ell^2$-Betti numbers of groups},
   journal={Math. Ann.},
   volume={355},
   date={2013},
   number={4},
   pages={1331--1347},
}

\bib{pettho}{article}{
   author={Peterson, Jesse},
   author={Thom, Andreas},
   title={Group cocycles and the ring of affiliated operators},
   journal={Invent. Math.},
   volume={185},
   date={2011},
   number={3},
   pages={561--592},
}

\bib{pichot}{article}{
   author={Pichot, Mika{\"e}l},
   title={Semi-continuity of the first $l^2$-Betti number on the space of
   finitely generated groups},
   journal={Comment. Math. Helv.},
   volume={81},
   date={2006},
   number={3},
   pages={643--652},
   issn={0010-2571},
 %  review={\MR{2250857 (2007e:20089)}},
 %  doi={10.4171/CMH/67},
}

\bib{MR0231897}{article}{
   author={Platonov, Vladimir P.},
   title={A certain problem for finitely generated groups},
   language={Russian},
   journal={Dokl. Akad. Nauk BSSR},
   volume={12},
   date={1968},
   pages={492--494},
  }

\end{biblist}
\end{bibdiv}

\end{document}